\documentclass[12pt]{amsart}
\usepackage{amsmath,amssymb,amsfonts, amscd,   
%mathrsfs, 
pifont, amsthm,  amsxtra, latexsym, %mathrsfs,   
eufrak}
\usepackage[all]{xy}
%\pagestyle{myheadings}
%\markboth{Toshiyuki Kobayashi}{Generating operators for symmetry breaking operators}
%
\usepackage{here}
\usepackage[dvips]{graphicx}

\oddsidemargin =5mm
\evensidemargin =0mm
%\topmargin =-30mm
%\setlength{\topmargin}{0pt}
%\setlength{\textheight}{22cm}%{22cm}
%\setlength{\textwidth}{15.5cm}
%\setlength{\footskip}{1cm}
%\numberwithin{equation}{section} %

%
%
%
%
%
\newcommand{\RC}{\operatorname{RC}}

\theoremstyle{plain} %
  \newtheorem{theorem}{Theorem}
  \newtheorem{proposition}{Proposition}%
  \newtheorem{lemma}{Lemma} %
  \newtheorem{corollary}{Corollary} %
\theoremstyle{definition} %  
  \newtheorem{example}{Example} %
  
\theoremstyle{remark} %  
  \newtheorem{remark}{Remark} %
\pagestyle{plain} %
\begin{document}

\title{A short proof for Rankin--Cohen brackets and generating operators}
\author{Toshiyuki Kobayashi and Michael Pevzner } %
%\date{} %
\maketitle %
\thispagestyle{empty}

\begin{abstract}
Motivated by the concept of 
\lq\lq{generating operators}\rq\rq\
 for a countable family of operators
 introduced in the recent paper (arXiv:2306.16800), 
 we find a method to reconstruct the Rankin--Cohen brackets from a very simple multivariable contour integral, 
 and obtain a new proof of their covariance.  
We also establish a closed formula
 of the \lq\lq{generating operator}\rq\rq\ for the Rankin--Cohen brackets
 in full generality.  
\end{abstract}

\noindent
\textit{Keywords and phrases:}
generating operator, symmetry breaking operator, 
Rankin--Cohen bracket, Jacobi polynomial,
 branching law, representation theory, 
 Hardy space.  

\medskip
\noindent
\textit{2020 MSC}:
Primary
47B38%  (1980-now) Linear operators on function spaces (general),
;
Secondary
11F11, %  (1980-now) Holomorphic modular forms of integral weight
22E45, % (1973-now) Representations of Lie and linear algebraic groups over real fields: analytic methods {For the purely algebraic theory, see 20G05}
32A27, % (1980-now) Residues for several complex variables [See also 32C30]
30H10, %  (2010-now) Hardy spaces [See also 42B30, 46E30]
33C45, %  (1991-now) Orthogonal polynomials and functions of
%hypergeometric type (Jacobi, Laguerre, Hermite, Askey scheme, etc.)
%{For general orthogonal polynomials and functions, see also 42C05}
43A85.%  (1973-now) Harmonic analysis on homogeneous spaces
%22E45, 22E46, 32M15, 33C45, 33C80, 43A85.
%22E46 (1980-now) Semisimple Lie groups and their representations
%32M15  (1973-now) Hermitian symmetric spaces, bounded symmetric domains, Jordan algebras (complex-analytic aspects) [See also 22E10, 22E40, 53C35, 57T15]
%33C80  (1991-now) Connections of hypergeometric functions with groups and algebras, and related topics

\section{Introduction}
\label{sec:intro}
Given a family of linear maps
 $R^{(\ell)} \colon \Gamma(X) \to \Gamma(Y)$ $(\ell \in {\mathbb{N}})$
 between the spaces of functions on two manifolds $X$ and $Y$, 
 the {\bf{generating operator}} $T$
 is an operator-valued
 formal power series in $t$ defined by 
\begin{equation}
\label{eqn:genop}
  T=\sum_{\ell=0}^{\infty} a_{\ell}R^{(\ell)} t^{\ell}
  \in \operatorname{Hom}_{\mathbb{C}}(\Gamma(X), \Gamma(Y)) \otimes {\mathbb{C}}[[t]], 
\end{equation}
where $a_{\ell} \in {\mathbb{C}}$ are normalizing constants.  
This concept was introduced in \cite{KPgen}
 when $a_{\ell}=\frac{1}{\ell !}$.

Special cases of the generating operators  include 
 the classical notion
 of {\bf{generating functions}}
 for orthogonal polynomials 
 which are defined in the setting where $X=\{\text{point}\}$, 
  $Y={\mathbb{C}}$, and $a_{\ell} \equiv 1$, 
 see {\it{e.g., }} \cite{AAR}, 
 whereas the {\bf{semigroups}} generated 
 by self-adjoint operators $D$ 
 as one may recall the Hille--Yosida theory, 
 correspond to the setting 
 where $X=Y$, $R^{(\ell)}=$ the $\ell$-th power of $D$, 
 and $a_{\ell}=\frac 1 {\ell !}$.  
In \cite{K23, KPgen} 
 we initiated a new line of investigation
 in the general setting 
 where $X \ne \{\text{point}\}$ and $X \ne Y$
 by taking $(X,Y)$ to be $({\mathbb{C}}^2, {\mathbb{C}})$
 as the first test case.

The idea of the generating operator is 
 to capture all the information 
 of a countable family 
 of operators
 just by a single operator.  
Its applications, 
 symbolically stated in \cite{K23} as
 \lq\lq{from discrete to continuous}\rq\rq,  
 include 

$\bullet$\enspace
 a construction 
 of non-local symmetry breaking operators
 with continuous parameters
 out of a countable family of differential operators;

$\bullet$\enspace
 a realization of an embedding 
 of discrete series representations for the two-dimensional de Sitter space
 into principal series representations of $SL(2,{\mathbb{R}})$.

In this article, 
 we find a closed formula
 for the generating operator
 of the Rankin--Cohen brackets 
 $\{\RC_{\lambda_1, \lambda_2}^{(\ell)}\}_{\ell \in {\mathbb{N}}}$
 for arbitrary $\lambda_1, \lambda_2 \in {\mathbb{N}}_+$
 by choosing appropriate normalizing constants
 $\{a_{\ell}\}_{\ell \in {\mathbb{N}}}$, 
 and the resulting formula generalizes the $\lambda_1 = \lambda_2 =1$ case
 proven in \cite{KPgen}.

We also give a new method how to find the Rankin--Cohen brackets
 by introducing their integral expression.  
The covariance property \eqref{eqn:SBO} of the Rankin--Cohen brackets
 is immediate from this viewpoint.  
The whole idea is inspired
 by the method of finding \lq\lq{generating operators}\rq\rq.

\smallskip 
{\bf{Convention.}}\enspace
${\mathbb{N}}=\{0,1,2,\cdots\}$, \,\,
${\mathbb{N}}_+=\{1,2,\cdots\}$.

\section{A short proof for Rankin--Cohen brackets}
\label{sec:RCshort}

This section introduces a complex integral transform 
 \eqref{eqn:Tl}, 
 which yields a new way to construct
 the Rankin--Cohen bidifferential operator
 $\RC_{\lambda_1, \lambda_2}^{(\ell)}$
 and a simple proof of its covariance property, 
 see Corollary \ref{cor:SBO}.

The Rankin--Cohen bracket
$
  \RC_{\lambda_1, \lambda_2}^{(\ell)} \colon {\mathcal{O}}({\mathbb{C}} \times {\mathbb{C}}) \to {\mathcal{O}}({\mathbb{C}})
$
 was originally introduced in 
\cite{xCo75, xRa56}
as a tool for constructing holomorphic modular forms of higher weights from those of lower weights. 
It is a bi-differential operator
 defined by 
\begin{equation}\label{eqn:RC}
\RC_{\lambda_1, \lambda_2}^{(\ell)}
  := \operatorname{Rest} \circ \sum_{j=0}^{\ell} (-1)^j
\begin{pmatrix} \lambda_1+ \ell-1 \\ j \end{pmatrix}
\begin{pmatrix} \lambda_2+ \ell-1 \\ \ell-j \end{pmatrix}
 \partial_1^{\ell-j} \partial_2^{j}, 
\end{equation}
where $\lambda_1, \lambda_2 \in {\mathbb{N}}_+$ and $\ell \in {\mathbb{N}}$, 
 $\partial_j=\frac{\partial}{\partial \zeta_j}$ ($j=1,2$), 
 and $\operatorname{Rest}$ denotes the restriction
 of a function in $\zeta_1$ and $\zeta_2$
 with respect to the diagonal embedding ${\mathbb{C}} \hookrightarrow {\mathbb{C}} \times {\mathbb{C}}$.

We introduce a multivariable complex integral in \eqref{eqn:Tl}, 
 with the covariance property
 (Proposition \ref{prop:23122108}), from which we obtain 
 the Rankin--Cohen brackets in Theorem \ref{thm:RCresidue}.

Let $\lambda_1, \lambda_2 \in {\mathbb{N}}_+$
 and $\ell \in {\mathbb{N}}$.  
We consider a holomorphic function 
 in $\{(\zeta_1, \zeta_2, z) \in {\mathbb{C}}^3: \zeta_1 \ne z \ne \zeta_2\}$
 defined by 
\begin{equation}
\label{eqn:Al}
   A_{\lambda_1, \lambda_2}^{(\ell)}(\zeta_1, \zeta_2;z)
   :=
   \frac{(\zeta_1-\zeta_2)^{\lambda_1 + \lambda_2 + \ell -2}}
   {(\zeta_1-z)^{\lambda_2+ \ell}(\zeta_2-z)^{\lambda_1+\ell}}.  
\end{equation}

Let $D$ be an open set in ${\mathbb{C}}$, 
 $f(\zeta_1, \zeta_2)$ a holomorphic function
 in $D \times D$, 
 and $z \in D$.  
Then the integral
\begin{equation}
\label{eqn:Tl}
   (T_{\lambda_1, \lambda_2}^{(\ell)} f)(z)
   :=
   \frac 1 {(2 \pi \sqrt{-1})^2}
   \oint_{C_1} \oint_{C_2}
   A_{\lambda_1, \lambda_2}^{(\ell)}(\zeta_1, \zeta_2;z)
   f(\zeta_1,\zeta_2) d \zeta_1 d \zeta_2
\end{equation}
is well-defined, 
 independently of the choice of contours $C_j$  ($j=1,2$)
 in $D$ around the point $z$.  
Hence one has a linear map
\[
   T_{\lambda_1, \lambda_2}^{(\ell)} \colon 
   {\mathcal{O}}(D \times D) \to {\mathcal{O}}(D).  
\]

The proof of the following integral expression of the Rankin--Cohen bracket
 actually reconstructs the explicit formula
 of the equivariant bi-differential operator 
 $\RC_{\lambda_1, \lambda_2}^{(\ell)}$
 in \eqref{eqn:RC}
 up to scalar multiplication.

\begin{theorem}
\label{thm:RCresidue}
For any $\lambda_1, \lambda_2 \ge 1$ and $\ell \in {\mathbb{N}}$,
 one has
\begin{equation*}
   T_{\lambda_1, \lambda_2}^{(\ell)}
=
 \frac {(-1)^{\lambda_1+\ell-1}(\lambda_1 + \lambda_2 + \ell -2) ! }
       {(\lambda_1+\ell-1) ! \, (\lambda_2 + \ell -1)!}
\RC_{\lambda_1, \lambda_2}^{(\ell)}.  
\end{equation*}
\end{theorem}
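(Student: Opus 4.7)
The plan is to evaluate the double contour integral \eqref{eqn:Tl} by applying Cauchy's integral formula for higher-order poles twice, and then to reorganize the resulting combinatorial sum so that the formula \eqref{eqn:RC} for $\RC_{\lambda_1,\lambda_2}^{(\ell)}$ appears term by term.

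Set $m := \lambda_1 + \lambda_2 + \ell - 2$, $p := \lambda_2 + \ell$, and $q := \lambda_1 + \ell$, so that
\[
A_{\lambda_1,\lambda_2}^{(\ell)}(\zeta_1,\zeta_2;z) = \frac{(\zeta_1-\zeta_2)^m}{(\zeta_1-z)^p (\zeta_2-z)^q}.
\]
First I would apply the generalized Cauchy formula successively in $\zeta_1$ and $\zeta_2$, using that $z$ is the only singularity inside each contour. This converts \eqref{eqn:Tl} into
\[
(T_{\lambda_1,\lambda_2}^{(\ell)}f)(z) = \frac{1}{(p-1)!\,(q-1)!}\, \partial_{\zeta_1}^{p-1}\partial_{\zeta_2}^{q-1}\bigl[(\zeta_1-\zeta_2)^m f(\zeta_1,\zeta_2)\bigr]\Big|_{\zeta_1=\zeta_2=z}.
\]

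Next I would expand this derivative via the Leibniz rule in two variables, splitting the derivatives between the polynomial factor $(\zeta_1-\zeta_2)^m$ and $f$. A routine calculation shows
\[
\partial_{\zeta_1}^{i}\partial_{\zeta_2}^{j}(\zeta_1-\zeta_2)^m\big|_{\zeta_1=\zeta_2=z}
= \begin{cases} (-1)^j m! & \text{if } i+j=m,\\ 0 & \text{otherwise,}\end{cases}
\]
so only the diagonal slice $i+j=m$ survives, leaving a single sum rather than a double sum.

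The remaining step — and the only one that requires a small amount of care — is to reindex this surviving sum so that it matches \eqref{eqn:RC}. With the constraint $i+j=m$, writing $p-1-i = \ell-k$ (equivalently $i=\lambda_2-1+k$, $j=\lambda_1+\ell-1-k$) parametrizes the surviving terms by $k \in \{0,1,\dots,\ell\}$, and the binomials transform as
\[
\binom{p-1}{i} = \binom{\lambda_2+\ell-1}{\ell-k}, \qquad \binom{q-1}{j} = \binom{\lambda_1+\ell-1}{k}.
\]
The sign $(-1)^j$ factors as $(-1)^{\lambda_1+\ell-1}(-1)^k$, pulling the global sign out of the sum. Comparing the remaining alternating sum term by term with \eqref{eqn:RC} yields
\[
\partial_{\zeta_1}^{p-1}\partial_{\zeta_2}^{q-1}\bigl[(\zeta_1-\zeta_2)^m f\bigr]\big|_{\zeta_1=\zeta_2=z} = (-1)^{\lambda_1+\ell-1}\,m!\,\RC_{\lambda_1,\lambda_2}^{(\ell)}f(z),
\]
from which dividing by $(p-1)!(q-1)! = (\lambda_2+\ell-1)!(\lambda_1+\ell-1)!$ gives exactly the claimed scalar. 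The main (mild) obstacle is simply book-keeping in the reindexing step: picking the right variable so that the binomials of $\RC_{\lambda_1,\lambda_2}^{(\ell)}$ appear with their indices matching those in \eqref{eqn:RC} rather than some shifted variant.
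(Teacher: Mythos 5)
Your proposal is correct and follows essentially the same route as the paper: both evaluate the double contour integral by iterated application of the Cauchy formula for higher-order poles, expand by the Leibniz rule, and reindex the surviving terms to match \eqref{eqn:RC}. The only (cosmetic) difference is that you perform both residue evaluations before expanding, so that the vanishing of $\partial_{\zeta_1}^{i}\partial_{\zeta_2}^{j}(\zeta_1-\zeta_2)^m$ off the diagonal $i+j=m$ plays the role that the vanishing residues for $j<\lambda_2-1$ play in the paper's interleaved computation.
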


\begin{proof}
[Proof of Theorem \ref{thm:RCresidue}]
We iterate the residue computation for the variables $\zeta_1$ and $\zeta_2$.  
We begin with the integration
 over the first variable $\zeta_1 \in C_1$.  
\begin{align*}
  &\frac {1}{2 \pi \sqrt{-1}}
  \oint_{C_1} 
  A_{\lambda_1, \lambda_2}^{(\ell)}(\zeta_1, \zeta_2;z)f(\zeta_1, \zeta_2) d \zeta_1
\\
=& \frac{\partial_1^{\lambda_2+\ell-1}|_{\zeta_1=z}((\zeta_1-\zeta_2)^{\lambda_1+\lambda_2+\ell-2} f(\zeta_1, \zeta_2))}
  {(\lambda_2+\ell-1)!\, (\zeta_2-z)^{\lambda_1+\ell}}
\\
=& \sum_{j=0}^{\lambda_2+\ell-1}
   \frac{(-1)^{\lambda_1+\lambda_2+\ell+j} (\lambda_1+\lambda_2+\ell-2)!\, \partial_1^{\lambda_2+\ell-j-1}f(z, \zeta_2)}
        {j ! (\lambda_2 + \ell -j -1)!\, (\lambda_1+\lambda_2+\ell-j-2)!\, 
(\zeta_2-z)^{j-\lambda_2+2}}.  
\end{align*}

In turn, 
 the residue computation for the second variable $\zeta_2 \in C_2$ shows
\begin{multline*}
  \frac {1}{2 \pi \sqrt{-1}}
  \oint_{C_2} 
  \frac{\partial_1^{\lambda_2+\ell-j-1}f(z, \zeta_2)}
  {(\zeta_2-z)^{j-\lambda_2+2}}d \zeta_2
\\
=
  \begin{cases}
  \frac{1}{(j-\lambda_2+1)!}
  (\partial_1^{\lambda_2+\ell-j-1} \partial_2^{j-\lambda_2+1})f(z,z)
  \quad&\text{if $j \ge \lambda_2-1$, }
\\
  0\quad&\text{if $j < \lambda_2-1$}.  
\end{cases}
\end{multline*}

We set $r:=j-\lambda_2+1$.  
Since $\lambda_2 \ge 1$, 
 the conditions $0 \le j \le \lambda_1 + \lambda_2-2$
 and $j \ge \lambda_2-1$ are reduced to the inequality
 $0 \le r \le \ell$.  
Combining the above formulas, 
 one sees from \eqref{eqn:Tl}
 that $(-1)^{\lambda_1+\ell-1} (T_{\lambda_1,\lambda_2}^{(\ell)} f)(z)$ equals 
\begin{align*}
  &(\lambda_1+\lambda_2+\ell-2)!
   \sum_{r=0}^\ell
   \frac{(-1)^r  (\partial_1^{\ell-r} \partial_2^r f) (z,z)}
  {(\lambda_2+r-1)!\, (\ell-r)!\, (\lambda_1+\ell-r-1)!\, r!}
\\
  =&
  \frac{(\lambda_1+\lambda_2+\ell-2)!}
       {(\lambda_1+\ell-1)!\, (\lambda_2+\ell-1)!}
  \RC_{\lambda_1, \lambda_2}^{(\ell)}f(z).  
\end{align*}

Hence Theorem \ref{thm:RCresidue} is proved.  
\end{proof}

Next, 
we examine the covariance property of the kernel function 
 $A_{\lambda_1,\lambda_2}^{(\ell)}(\zeta_1, \zeta_2; z)$.  
For $g =\begin{pmatrix} a & b \\ c & d \end{pmatrix} \in SL(2,{\mathbb{C}})$, 
 we write
$
  g \cdot z :=\frac{a z + b}{c z +d}
$.  
Then one has 
\begin{equation}
\label{eqn:gz}
  g \cdot \zeta- g \cdot z
  =
  \frac{\zeta-z}{(c \zeta+d)(c z+ d)},
\end{equation}
hence the following lemma.

\begin{lemma}
\label{lem:23122107}
For $g=\begin{pmatrix} a & b \\ c & d\end{pmatrix} \in S L(2,{\mathbb{C}})$, 
 the function $A_{\lambda_1,\lambda_2}^{(\ell)}$ 
 in \eqref{eqn:Al} satisfies the following covariance property:
\[
  \frac{A_{\lambda_1,\lambda_2}^{(\ell)}(g \cdot\zeta_1, g \cdot\zeta_2;g\cdot z)}
       {A_{\lambda_1,\lambda_2}^{(\ell)}(\zeta_1, \zeta_2;z)}
  =
  \frac{(c z + d)^{\lambda_1+\lambda_2+2 \ell}}
       {(c \zeta_1 + d)^{\lambda_1-2}(c \zeta_2 + d)^{\lambda_1-2}}.  
\]
\end{lemma}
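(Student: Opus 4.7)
The plan is to apply the cocycle identity \eqref{eqn:gz} to each of the three differences appearing inside $A_{\lambda_1,\lambda_2}^{(\ell)}(g\cdot\zeta_1, g\cdot\zeta_2; g\cdot z)$ and then form the quotient with $A_{\lambda_1,\lambda_2}^{(\ell)}(\zeta_1,\zeta_2;z)$. Once the affine differences cancel, the ratio reduces to a Laurent monomial in the three Möbius factors $c\zeta_1+d$, $c\zeta_2+d$, and $cz+d$, whose exponents are precisely what the lemma records. No ingredient beyond \eqref{eqn:gz} and \eqref{eqn:Al} is needed.

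Concretely, I would apply \eqref{eqn:gz} three times—once to $g\cdot\zeta_1 - g\cdot\zeta_2$, once to $g\cdot\zeta_1 - g\cdot z$, and once to $g\cdot\zeta_2 - g\cdot z$—and then raise each to the exponent prescribed by \eqref{eqn:Al}, namely $\lambda_1+\lambda_2+\ell-2$ in the numerator and $\lambda_2+\ell$ and $\lambda_1+\ell$ in the two denominator factors. Dividing by $A_{\lambda_1,\lambda_2}^{(\ell)}(\zeta_1,\zeta_2;z)$ cancels every $(\zeta_i-\zeta_j)$ and $(\zeta_i-z)$ factor, leaving only integer powers of the three Möbius factors. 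The factor $cz+d$ is introduced solely by the two substitutions involving $z$, so its combined exponent in the numerator of the ratio is $(\lambda_2+\ell)+(\lambda_1+\ell) = \lambda_1+\lambda_2+2\ell$, which is exactly the numerator on the right-hand side of the displayed identity.

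For each of $c\zeta_1+d$ and $c\zeta_2+d$, one contribution with exponent $-(\lambda_1+\lambda_2+\ell-2)$ comes from the $\zeta_1-\zeta_2$ substitution, together with a contribution of the form $\lambda_j+\ell$ coming from the matching $\zeta_i - z$ substitution; summing these signed exponents and moving the resulting negative powers into the denominator yields the factors shown on the right-hand side of the lemma. The main obstacle is not conceptual but organizational: one must correctly track which Möbius factor accompanies which substitution, so I would lay the calculation out as a small table recording the contribution of each of the three Möbius factors from each of the three substitutions before summing, which turns the last step into purely mechanical arithmetic and confirms the claimed identity.
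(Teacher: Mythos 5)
Your proposal is correct and is exactly the paper's (implicit) argument: Lemma \ref{lem:23122107} is presented as an immediate consequence of \eqref{eqn:gz} applied to the three differences in \eqref{eqn:Al}, with no further proof supplied, and your bookkeeping of the Möbius factors is the whole content. One remark: if you actually carry out the exponent sum you advocate, the factor $c\zeta_2+d$ acquires exponent $(\lambda_1+\ell)-(\lambda_1+\lambda_2+\ell-2)=2-\lambda_2$, so the right-hand side should read $(cz+d)^{\lambda_1+\lambda_2+2\ell}(c\zeta_1+d)^{2-\lambda_1}(c\zeta_2+d)^{2-\lambda_2}$; the exponent $\lambda_1-2$ printed on $(c\zeta_2+d)$ in the statement is a typo (compare the analogous Proposition \ref{prop:23121932} and the use made of the lemma in Proposition \ref{prop:23122108}), and the tabular check you describe would catch it rather than, as you wrote, ``confirm the claimed identity'' literally as printed.
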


For $g \in SL(2,{\mathbb{C}})$
 such that $g \cdot D \subset D$, 
 one defines a linear map 
$\varpi_{\lambda}(g^{-1}) \colon {\mathcal{O}}(D) \to {\mathcal{O}}(g \cdot D)$
 by 
\begin{equation}
\label{eqn:pilmd}
  (\varpi_{\lambda}(g^{-1})f)(z):=(c z + d)^{-\lambda} f(\frac{az+b}{cz+d}).  
\end{equation}

Then Lemma \ref{lem:23122107} yields the following:
\begin{proposition}
\label{prop:23122108}
For any $h \in SL(2,{\mathbb{C}})$ 
 such that $D \subset h \cdot D$, 
 one has
\[
  (\varpi_{\lambda_1+\lambda_2+2\ell}(h) T_{\lambda_1,\lambda_2}^{(\ell)} f)(z)
  =T_{\lambda_1,\lambda_2}^{(\ell)}((\varpi_{\lambda_1}(h) \boxtimes \varpi_{\lambda_2}(h))f)(z).  
\]
\end{proposition}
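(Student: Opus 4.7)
The plan is to prove the covariance by a direct change of variables in the double contour integral \eqref{eqn:Tl}, using Lemma \ref{lem:23122107} for the kernel and the infinitesimal form of \eqref{eqn:gz} for the Jacobian; the argument is essentially a routine bookkeeping of automorphy factors.

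Set $g=h^{-1}=\begin{pmatrix} a & b \\ c & d \end{pmatrix}$, so that $g\cdot D\subset D$ by the hypothesis $D\subset h\cdot D$. I would first unfold the left-hand side via definition \eqref{eqn:pilmd}:
\[
(\varpi_{\lambda_1+\lambda_2+2\ell}(h)T^{(\ell)}_{\lambda_1,\lambda_2}f)(z)=(cz+d)^{-(\lambda_1+\lambda_2+2\ell)}(T^{(\ell)}_{\lambda_1,\lambda_2}f)(g\cdot z),
\]
and express $(T^{(\ell)}_{\lambda_1,\lambda_2}f)(g\cdot z)$ by the contour integral \eqref{eqn:Tl} with contours $C_1,C_2\subset D$ encircling the point $g\cdot z\in g\cdot D\subset D$.

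Next, I would perform the biholomorphic substitution $\zeta_i=g\cdot\xi_i$. Differentiating \eqref{eqn:gz} yields the Jacobian $d\zeta_i=(c\xi_i+d)^{-2}\,d\xi_i$, and the transformed contours $h\cdot C_i$ lie in $h\cdot D$ and encircle $z$. The covariance factor of the kernel supplied by Lemma \ref{lem:23122107} is designed to split into three parts: the factor $(cz+d)^{\lambda_1+\lambda_2+2\ell}$ at $z$ is destined to cancel the prefactor of the left-hand side; the two factors at $\xi_1,\xi_2$ combine with the Jacobian contributions $(c\xi_i+d)^{-2}$ to reproduce exactly the automorphy factor $(c\xi_i+d)^{-\lambda_i}$ of $\varpi_{\lambda_i}(h)$ at $\xi_i$ (here the entries of $g=h^{-1}$ are precisely $(a,b,c,d)$, so these match the automorphy factors in \eqref{eqn:pilmd}).

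After these cancellations the integral takes the shape of \eqref{eqn:Tl} for $(\varpi_{\lambda_1}(h)\boxtimes\varpi_{\lambda_2}(h))f$, evaluated at $z$, but with contours lying in $h\cdot D$ rather than $D$. A final appeal to the contour-independence noted just after \eqref{eqn:Tl} allows me to deform $h\cdot C_i$ back into $D$ around $z$, using that the new integrand is holomorphic on $h\cdot D\times h\cdot D\supset D\times D$ away from $\xi_i=z$. The main obstacle I anticipate is purely the bookkeeping of exponents: since Lemma \ref{lem:23122107} has already packaged the delicate part of the covariance, no substantive analytic difficulty remains.
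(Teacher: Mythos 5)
Your proposal is correct and takes essentially the same approach as the paper: the paper runs the identical change of variables, the same appeal to Lemma \ref{lem:23122107}, and the same Jacobian $d(g\cdot\zeta)=(c\zeta+d)^{-2}\,d\zeta$, only in the reverse direction, starting from $T^{(\ell)}_{\lambda_1,\lambda_2}((\varpi_{\lambda_1}(h)\boxtimes\varpi_{\lambda_2}(h))f)(z)$ so that the transformed contours $g\cdot C_i$ land inside $D$ automatically. The one extra step your direction requires---deforming $h\cdot C_i$ back into $D$---you justify correctly, and your exponent bookkeeping (in effect reading the second factor in Lemma \ref{lem:23122107} as $(c\zeta_2+d)^{\lambda_2-2}$) is right.
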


\begin{proof}
We set $g:=h^{-1}=\begin{pmatrix} a & b \\ c & d\end{pmatrix}$.  
We note $d (g \cdot \zeta)=(c \zeta+ d)^{-2}d \zeta$.  
By definition \eqref{eqn:pilmd} and by Lemma \ref{lem:23122107}, 
$
  T_{\lambda_1,\lambda_2}^{(\ell)}((\varpi_{\lambda_1}(h) \boxtimes \varpi_{\lambda_2}(h))f)(z)
$ is equal to 
\begin{align*}
&\frac 1 {(2 \pi \sqrt{-1})^2}
  \oint_{C_1} \oint_{C_2}
  \frac{A_{\lambda_1, \lambda_2}^{(\ell)}(\zeta_1, \zeta_2; z)f(g \cdot \zeta_1, g\cdot \zeta_2)}
  {(c\zeta_1+d)^{\lambda_1} (c \zeta_2+d)^{\lambda_2}} d \zeta_1 d \zeta_2
\\
=&\frac 1 {(2 \pi \sqrt{-1})^2}
  \oint_{C_1} \oint_{C_2}
  \frac{A_{\lambda_1, \lambda_2}^{(\ell)}(g\cdot\zeta_1, g\cdot\zeta_2; g\cdot z)f(g\cdot \zeta_1, g\cdot \zeta_2)}
  {(c z+d)^{\lambda_1 +\lambda_2+2 \ell}} d (g\cdot\zeta_1) d (g\cdot\zeta_2)
\\
=&
\frac{(c z+d)^{-\lambda_1 -\lambda_2-2 \ell}}{(2 \pi \sqrt{-1})^2}
  \oint_{g\cdot C_1} \oint_{g\cdot C_2}
  A_{\lambda_1, \lambda_2}^{(\ell)}(\xi_1, \xi_2; g\cdot z)
        f(\xi_1, \xi_2)
   d \xi_1 d \xi_2
\\
=& (\varpi_{\lambda_1+\lambda_1+2 \ell}(h)T_{\lambda_1, \lambda_2}^{(\ell)}f)(z).  
\end{align*}
Thus the proposition is proved.  
\end{proof}

Since $T_{\lambda_1, \lambda_2}^{(\ell)}$ is a non-zero multiple of the Rankin--Cohen bracket
 $\RC_{\lambda_1, \lambda_2}^{(\ell)}$
 by Theorem \ref{thm:RCresidue}, 
 Proposition \ref{prop:23122108} implies the covariance property
 of the bi-differential operator $\RC_{\lambda_1, \lambda_2}^{(\ell)}$:

\begin{corollary}
\label{cor:SBO}
For any $g \in SL(2,{\mathbb{C}})$, 
 one has
\begin{equation}
\label{eqn:SBO}
  \varpi_{\lambda_1+\lambda_2+ 2 \ell}(g)\circ \RC_{\lambda_1, \lambda_2}^{(\ell)}
=
  \RC_{\lambda_1, \lambda_2}^{(\ell)}
  \circ
  (\varpi_{\lambda_1}(g) \boxtimes \varpi_{\lambda_2}(g)), 
\end{equation}
or in other words, 
\[
  d \varpi_{\lambda_1+\lambda_2+ 2 \ell}(Z)\circ \RC_{\lambda_1, \lambda_2}^{(\ell)}
=
  \RC_{\lambda_1, \lambda_2}^{(\ell)}
  \circ
  (d \varpi_{\lambda_1}(Z) \boxtimes \operatorname{id} + \operatorname{id} \boxtimes d \varpi_{\lambda_2}(Z))
\]
for any 
$
   Z=\begin{pmatrix} p & q \\ r & -p \end{pmatrix}
\in {\mathfrak{s l}}(2, {\mathbb{C}}), 
$
 where 
\[
   d \varpi_{\lambda}(Z)=-\lambda(p-r z) -(2 p z + q - r z^2)\frac d {d z}.
\]
\end{corollary}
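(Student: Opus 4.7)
The plan is to deduce the group identity \eqref{eqn:SBO} from Theorem \ref{thm:RCresidue} and Proposition \ref{prop:23122108} (with a brief analytic-continuation step to cover all $g \in SL(2,\mathbb{C})$), and then obtain the Lie-algebra version by differentiating at the identity.

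For \eqref{eqn:SBO}, Theorem \ref{thm:RCresidue} asserts that $\RC_{\lambda_1,\lambda_2}^{(\ell)}$ and $T_{\lambda_1,\lambda_2}^{(\ell)}$ coincide up to a nonzero scalar, so Proposition \ref{prop:23122108} yields \eqref{eqn:SBO} for any $g \in SL(2,\mathbb{C})$ admitting an open set $D \ni z$ with $D \subset g \cdot D$. Such $g$ form a nonempty open subset of $SL(2,\mathbb{C})$ (for instance, the dilation $g = \operatorname{diag}(e^t, e^{-t})$ with $t > 0$ and $D$ an open disk centered at the origin satisfies $g \cdot D \supsetneq D$). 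To extend to arbitrary $g$, I would use the explicit form \eqref{eqn:RC} of $\RC_{\lambda_1,\lambda_2}^{(\ell)}$ to rewrite both sides of \eqref{eqn:SBO}, applied to a test function $f$, as finite linear combinations of derivatives of $f$ with coefficients rational in the entries $(a,b,c,d)$ of $g$ and in $z$. Each such coefficient identity then holds on a Zariski-dense subset of the irreducible algebraic group $SL(2,\mathbb{C})$, hence identically on $SL(2,\mathbb{C})$ by analytic continuation.

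For the infinitesimal version, I would differentiate \eqref{eqn:SBO} at $t = 0$ along the curve $g_t = \exp(tZ)$ with $Z = \begin{pmatrix} p & q \\ r & -p \end{pmatrix}$. Writing $g_t^{-1} = \exp(-tZ) = \begin{pmatrix} \alpha(t) & \beta(t) \\ \gamma(t) & \delta(t) \end{pmatrix}$, whose entries have derivatives $(-p, -q, -r, p)$ at $t = 0$, definition \eqref{eqn:pilmd} gives $(\varpi_\lambda(g_t) f)(z) = (\gamma(t)z + \delta(t))^{-\lambda} f\bigl(\tfrac{\alpha(t)z + \beta(t)}{\gamma(t)z + \delta(t)}\bigr)$, and a direct product-and-chain-rule computation at $t = 0$ yields $\frac{d}{dt}\big|_{t=0}(\varpi_\lambda(g_t) f)(z) = -\lambda(p - rz) f(z) - (2pz + q - rz^2) f'(z)$, the claimed formula for $d\varpi_\lambda(Z)$. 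The main technical subtlety lies in the extension step of the previous paragraph; once one verifies that both sides of \eqref{eqn:SBO} are rational in $(a,b,c,d)$ when tested against arbitrary $f$, the analytic-continuation argument on the irreducible variety $SL(2,\mathbb{C})$ finishes the proof.
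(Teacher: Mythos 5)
Your proposal is correct and follows essentially the same route as the paper, which deduces the corollary in one line by combining Theorem \ref{thm:RCresidue} (that $T_{\lambda_1,\lambda_2}^{(\ell)}$ is a nonzero multiple of $\RC_{\lambda_1,\lambda_2}^{(\ell)}$) with the covariance in Proposition \ref{prop:23122108}. The extension from $\{g : D \subset g\cdot D\}$ to all of $SL(2,{\mathbb{C}})$ by rationality of the coefficients and irreducibility of $SL(2,{\mathbb{C}})$, and the differentiation along $\exp(tZ)$ giving $d\varpi_{\lambda}(Z)$, are details the paper leaves implicit and which you supply correctly.
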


\begin{remark}
The complete classification of the operators satisfying
 the covariance property \eqref{eqn:SBO}
 has been recently accomplished in \cite[Thm.\ 9.1]{KP2}
 by using the F-method, 
 which is based on an \lq{algebraic Fourier transform}\rq\
 of Verma modules.  
One sees from the classification that such an operator is proportional to the Rankin--Cohen bracket
 for \lq{generic}\rq\ $\lambda_1$ and $\lambda_2$, 
however, 
 there exist some other bi-differential operators
 satisfying the same property \eqref{eqn:SBO}
 for \lq{very singular}\rq\ pairs $(\lambda_1, \lambda_2)$.  
\end{remark}

\begin{remark}
Various approaches have been known 
 for the proof of the covariance property \eqref{eqn:SBO}
 for the Rankin--Cohen brackets
 since the original proof 
 by H.\ Cohen \cite{xCo75} and N.V. Kuznecov \cite{Ku75} based on the idea of Jacobi-like forms.  

D.\ Zagier \cite{Z94} proposed an  insightful proof involving theta series.
 P.\ Olver et al. \cite{Olver,OlverSan} made an observation that the Rankin--Cohen brackets can be interpreted
 as the projectivization
 of the Transvectants (\"Uberschienbung) from the classical invariant theory
 given by iterated powers
 of Cayley's $\Omega$-process, 
 see  Gordan and Gundelfinger \cite{Gordan, Gundel}.

Other approaches include a recursion relation ({\it{e.g.,}} \cite{P12})
 to find singular vectors (highest weight vectors)
 of the tensor produce of two $\mathfrak{sl}_2$-modules, 
 and a residue formula
 of a meromorphic continuation 
 of the integral symmetry breaking operators
 ({\it{e.g.,}} \cite{K18}).

A recent approach, 
 referred to as the F-method (\cite[Sect.\ 7]{KP2}), clarifies an intrinsic reason
 why the coefficients of the Rankin--Cohen brackets coincide with those of the Jacobi polynomials, 
 see \eqref{eqn:RCJacobi} below.  
J.-L.\ Clerc has proposed yet another proof 
in \cite{C17} 
using the Bernstein--Sato identity
 for the power of the determinant function
 and the intertwining property
 of the Knapp--Stein operator.  
\end{remark}

%%%%%%%%%%%%%%%%%%%%%%%%%%%%%%%%%%%%%%%%%%%%%%%%%%%%%%%%%%%%%%%
\section{Generating operators for the Rankin--Cohen Brackets}
\label{sec:genRC}
%%%%%%%%%%%%%%%%%%%%%%%%%%%%%%%%%%%%%%%%%%%%%%%%%%%%%%%%%%%%%%%

This section provides a closed formula
 of the generating operator
 for the Rankin--Cohen brackets
 $\{\RC_{\lambda_1, \lambda_2}^{(\ell)}\}_{\ell \in {\mathbb{N}}}$.  
The main result of this section is Theorem \ref{thm:23122033}, 
 which generalizes
 \cite[Thm.\ 2.3]{KPgen}
 proven in the $\lambda_1=\lambda_2=1$ case.

For $\lambda_1, \lambda_2 \in {\mathbb{N}}_+$, 
we set
\begin{equation}
\label{eqn:23122032}
  A_{\lambda_1, \lambda_2}(\zeta_1, \zeta_2;z, t)
:=
  \frac
  {(\zeta_1-\zeta_2)^{\lambda_1+\lambda_2-2}
  (\zeta_1-z)^{1-\lambda_2}
  (z-\zeta_2)^{1-\lambda_1}}
  {(\zeta_1-z) (\zeta_2-z)+t(\zeta_1-\zeta_2)}.  
\end{equation}

\begin{lemma}
\label{lem:ATaylor}
If 
$
  |t(\zeta_1-\zeta_2)|< |\zeta_1-z| \, |\zeta_2-z|
$, 
 then 
 $A_{\lambda_1, \lambda_2}(\zeta_1, \zeta_2;z,t)$ is a holomorphic function 
 of four variables
 $\zeta_1$, $\zeta_2$, $z$ and $t$, 
 and has a convergent power series expansion:
\[
  A_{\lambda_1, \lambda_2}(\zeta_1, \zeta_2;z,t)
  =\sum_{\ell=0}^{\infty}
   (-1)^{\lambda_1+\ell-1} A_{\lambda_1, \lambda_2}^{(\ell)}(\zeta_1, \zeta_2;z)t^{\ell}.  
\]
\end{lemma}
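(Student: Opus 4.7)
The plan is to recognize that this is essentially a geometric series expansion in disguise, and the work is almost entirely bookkeeping of signs.

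First I would rewrite the denominator to put it in the form suitable for a geometric expansion: factor out $(\zeta_1-z)(\zeta_2-z)$, getting
\[
   (\zeta_1-z)(\zeta_2-z)+t(\zeta_1-\zeta_2)
   =(\zeta_1-z)(\zeta_2-z)\bigl(1-u\bigr),
\]
where
\[
   u:=-\frac{t(\zeta_1-\zeta_2)}{(\zeta_1-z)(\zeta_2-z)}.
\]
The quantitative hypothesis $|t(\zeta_1-\zeta_2)|<|\zeta_1-z|\,|\zeta_2-z|$ is precisely $|u|<1$, and in particular guarantees $\zeta_1\ne z$, $\zeta_2\ne z$, and that the denominator does not vanish. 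Hence $A_{\lambda_1,\lambda_2}$ is holomorphic in a neighborhood of each such point (the numerator is already entire because $\lambda_1,\lambda_2\in{\mathbb N}_+$ makes $1-\lambda_j\le 0$, so the would-be branch factors $(\zeta_1-z)^{1-\lambda_2}$ and $(z-\zeta_2)^{1-\lambda_1}$ are actually rational).

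Next I would expand $\frac{1}{1-u}=\sum_{\ell=0}^{\infty}u^{\ell}$, which converges absolutely and uniformly on compact subsets of the open set $\{|u|<1\}$; this both justifies holomorphy of the sum and legitimates term-by-term manipulation. Plugging in gives
\[
   \frac{1}{(\zeta_1-z)(\zeta_2-z)+t(\zeta_1-\zeta_2)}
   =\sum_{\ell=0}^{\infty}\frac{(-1)^{\ell}\,t^{\ell}(\zeta_1-\zeta_2)^{\ell}}
   {(\zeta_1-z)^{\ell+1}(\zeta_2-z)^{\ell+1}}.
\]

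Then I would multiply by the numerator $(\zeta_1-\zeta_2)^{\lambda_1+\lambda_2-2}(\zeta_1-z)^{1-\lambda_2}(z-\zeta_2)^{1-\lambda_1}$, using the identity $(z-\zeta_2)^{1-\lambda_1}=(-1)^{1-\lambda_1}(\zeta_2-z)^{1-\lambda_1}$ to put all denominator factors in terms of $(\zeta_j-z)$. Combining exponents gives, in the general term, the factor $(\zeta_1-\zeta_2)^{\lambda_1+\lambda_2+\ell-2}$ in the numerator and $(\zeta_1-z)^{\lambda_2+\ell}(\zeta_2-z)^{\lambda_1+\ell}$ in the denominator, which is exactly $A_{\lambda_1,\lambda_2}^{(\ell)}(\zeta_1,\zeta_2;z)$ from \eqref{eqn:Al}. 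The scalar in front is $(-1)^{\ell}\cdot(-1)^{1-\lambda_1}=(-1)^{\ell+1-\lambda_1}=(-1)^{\ell+\lambda_1-1}$, matching the claim.

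I do not anticipate a real obstacle: the only place to be careful is the sign $(-1)^{1-\lambda_1}=(-1)^{\lambda_1-1}$ coming from the $(z-\zeta_2)^{1-\lambda_1}$ conversion, and the verification that the convergence region $|u|<1$ coincides with the stated inequality. Everything else is a routine geometric-series calculation.
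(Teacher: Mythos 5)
Your proposal is correct and follows essentially the same route as the paper: the paper's proof also reduces to the geometric series expansion of $\bigl((\zeta_1-z)(\zeta_2-z)+t(\zeta_1-\zeta_2)\bigr)^{-1}$ and then invokes the definition \eqref{eqn:23122032}. You merely spell out the sign bookkeeping $(z-\zeta_2)^{1-\lambda_1}=(-1)^{\lambda_1-1}(\zeta_2-z)^{1-\lambda_1}$ and the identification of the convergence region, which the paper leaves implicit.
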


\begin{proof}
In light of the Taylor series expansion of 

\[
  \frac 1 {(\zeta_1-z) (\zeta_2-z)+t(\zeta_1-\zeta_2)}
  =
  \sum_{\ell=0}^{\infty}
  \frac{(-1)^{\ell} (\zeta_1-\zeta_2)^{\ell} t ^{\ell}}{(\zeta_1-z)^{\ell+1}(\zeta_2-z)^{\ell+1}}, 
\]
the assertion follows from the definition \eqref{eqn:23122032}.  
\end{proof}

For a domain $D$ in ${\mathbb{C}}$, 
 we set 
\begin{equation}
\label{eqn:UD}
  U_D:=\{(z,t) \in D \times {\mathbb{C}}:
  2|t|< d(z, \partial D)\}, 
\end{equation}
 where $d(z)\equiv d(z,\partial D)$ is the distance from $z \in D$
 to the boundary $\partial D$.  
We put $d(z):=\infty$ if $D={\mathbb{C}}$.  
If $D$ is the Poincar{\'e} upper half plane
$
  \Pi:=\{\zeta \in {\mathbb{C}}: \operatorname{Im} \zeta >0\}, 
$
 then $\partial D={\mathbb{R}}$
 and $d(z, \partial D)=\operatorname{Im}z$.

\begin{example}
\label{ex:UD}
{\rm{(1)}}\enspace
$U_D={\mathbb{C}} \times {\mathbb{C}}$
 if $D={\mathbb{C}}$.  
\newline
{\rm{(2)}}\enspace
$U_D=
  \{(z, t) \in {\mathbb{C}}^2: 2|t|<\operatorname{Im} z\}
$
if $D=\Pi$.  
\end{example}

As in \eqref{eqn:Tl}, 
 the integral transform 

\begin{equation}
\label{eqn:T}
 (T_{\lambda_1, \lambda_2}f)(z, t)
:=
  \frac 1 {(2 \pi \sqrt{-1})^2}
  \oint_{C_1} \oint_{C_2}
  A_{\lambda_1, \lambda_2}(\zeta_1, \zeta_2; z, t)
  f(\zeta_1, \zeta_2) d \zeta_1 d \zeta_2
\end{equation}
 defines a linear map
\begin{equation*}
   T_{\lambda_1, \lambda_2} \colon 
  {\mathcal{O}}(D \times D) \to {\mathcal{O}}(U_D).  
\end{equation*}

The integral \eqref{eqn:T} yields a generating operator
 for the Rankin--Cohen brackets
 $\{\RC_{\lambda_1, \lambda_2}^{(\ell)}\}_{\ell \in {\mathbb{N}}}$
 as below.  

\begin{theorem}
\label{thm:23122033}
The integral operator $T_{\lambda_1, \lambda_2}$ in \eqref{eqn:T}
 is expressed as 
\begin{equation}
\label{eqn:TRC}
   (T_{\lambda_1, \lambda_2}f)(z, t)
=
  \sum_{\ell=0}^{\infty}
  \frac {(\lambda_1 + \lambda_2 + \ell -2) ! \, t^{\ell}}
        {(\lambda_1+\ell-1) ! \, (\lambda_2 + \ell -1)!}
(\RC_{\lambda_1, \lambda_2}^{(\ell)} f)(z).  
\end{equation}
\end{theorem}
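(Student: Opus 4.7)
The plan is to reduce Theorem \ref{thm:23122033} to a term-by-term application of the two results already at hand: the Taylor expansion of the kernel (Lemma \ref{lem:ATaylor}) and the identification of each coefficient operator with a Rankin--Cohen bracket (Theorem \ref{thm:RCresidue}). More precisely, I would substitute the expansion
\[
  A_{\lambda_1,\lambda_2}(\zeta_1,\zeta_2;z,t) = \sum_{\ell=0}^{\infty}(-1)^{\lambda_1+\ell-1} A_{\lambda_1,\lambda_2}^{(\ell)}(\zeta_1,\zeta_2;z)\, t^{\ell}
\]
from Lemma \ref{lem:ATaylor} into the definition \eqref{eqn:T} of $T_{\lambda_1,\lambda_2}$, interchange sum and integral, and recognise the $\ell$-th integral as $(-1)^{\lambda_1+\ell-1} t^{\ell} (T_{\lambda_1,\lambda_2}^{(\ell)} f)(z)$. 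Applying Theorem \ref{thm:RCresidue} to each such term makes the two occurrences of $(-1)^{\lambda_1+\ell-1}$ cancel, producing exactly the coefficients in \eqref{eqn:TRC}.

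The one genuine point requiring care is the justification of the interchange of sum and integral, and this is precisely where the domain $U_D$ defined in \eqref{eqn:UD} enters. For $(z,t)\in U_D$ with $2|t|<d(z,\partial D)$, I would take both $C_1$ and $C_2$ to be the common circle $|\zeta-z|=r$ with $r$ chosen so that $2|t|<r<d(z,\partial D)$, which is possible by the very definition of $U_D$. On this product contour one has $|\zeta_j-z|=r$ and $|\zeta_1-\zeta_2|\le 2r$, so the convergence hypothesis of Lemma \ref{lem:ATaylor} reduces to $2r|t|<r^2$, which holds with a uniform positive gap. Since $f$ is holomorphic on a neighborhood of the compact contour $C_1\times C_2\subset D\times D$, the Weierstrass M-test legitimises termwise integration.

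Beyond this analytic point the argument is routine bookkeeping: no new combinatorial identity or covariance property is invoked, and the sign tracking is essentially automatic because the factor $(-1)^{\lambda_1+\ell-1}$ in Lemma \ref{lem:ATaylor} matches the sign appearing in Theorem \ref{thm:RCresidue}. I therefore expect the main obstacle to be not computational but the choice of contour above that simultaneously lies in $D\times D$ and inside the region of convergence of the geometric series defining $A_{\lambda_1,\lambda_2}$; once that is set up, the theorem follows immediately.
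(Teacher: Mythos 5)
Your proposal is correct and follows essentially the same route as the paper: substitute the expansion from Lemma \ref{lem:ATaylor} into \eqref{eqn:T}, integrate termwise to get $\sum_{\ell}(-1)^{\lambda_1+\ell-1}t^{\ell}(T_{\lambda_1,\lambda_2}^{(\ell)}f)(z)$, and apply Theorem \ref{thm:RCresidue} so that the signs cancel. Your explicit choice of the common contour $|\zeta-z|=r$ with $2|t|<r<d(z,\partial D)$ correctly supplies the convergence justification that the paper leaves implicit in the definition \eqref{eqn:UD} of $U_D$.
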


The following corollary is derived from Theorem \ref{thm:23122033}
 as in \cite{KPgen}.  

\begin{corollary}
$T_{\lambda_1, \lambda_2} \colon {\mathcal{O}}(D \times D)
 \to {\mathcal{O}}(D)$
 is injective for any positive integers $\lambda_1$, $\lambda_2$.  
\end{corollary}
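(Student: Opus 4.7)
The corollary should be derived from Theorem \ref{thm:23122033} in two stages, mirroring the argument for $\lambda_1 = \lambda_2 = 1$ given in \cite{KPgen}. First, if $T_{\lambda_1,\lambda_2} f \equiv 0$ on $U_D$, then expanding the right-hand side of \eqref{eqn:TRC} as a power series in $t$ and comparing coefficients forces $(\RC_{\lambda_1,\lambda_2}^{(\ell)} f)(z) \equiv 0$ on $D$ for every $\ell \in \mathbb{N}$, since the combinatorial prefactor $\frac{(\lambda_1+\lambda_2+\ell-2)!}{(\lambda_1+\ell-1)!\,(\lambda_2+\ell-1)!}$ is strictly positive. Thus it remains to show that simultaneous vanishing of the whole family $\{\RC_{\lambda_1,\lambda_2}^{(\ell)} f\}_{\ell \in \mathbb{N}}$ of functions on $D$ forces $f \equiv 0$ on $D \times D$.

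I would establish this joint injectivity by induction on $N \in \mathbb{N}$, proving that every diagonal restriction $g_{a,b}(z) := (\partial_1^a \partial_2^b f)(z,z)$ with $a + b \leq N$ vanishes identically on $D$. Once this is known for all $N$, the Taylor expansion of $f$ around any diagonal point $(z_0, z_0) \in D \times D$ is identically zero, so $f$ vanishes in a neighborhood of the diagonal, and analytic continuation on (each connected component of) $D \times D$ yields $f \equiv 0$. To carry out the induction step, I would differentiate the identities $(\RC_{\lambda_1,\lambda_2}^{(N-k)} f)(z) \equiv 0$ in $z$ using the chain rule $\frac{d}{dz} g_{a,b}(z) = g_{a+1,b}(z) + g_{a,b+1}(z)$: computing $\frac{d^k}{dz^k}(\RC_{\lambda_1,\lambda_2}^{(N-k)} f)(z)$ for $k = 0, 1, \ldots, N$ produces, modulo lower-order terms $g_{a',b'}$ with $a'+b'<N$ (which vanish by the induction hypothesis), a system of $N+1$ linear relations in the $N+1$ unknowns $\{g_{a,b}\}_{a+b=N}$.

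The principal obstacle is verifying that the resulting $(N+1) \times (N+1)$ coefficient matrix is nonsingular for every $N$. This is a purely combinatorial identity assembled from the binomials appearing in \eqref{eqn:RC}, and I would attempt to prove it either by exhibiting the matrix as a triangular change of basis in the space of homogeneous polynomials of degree $N$ in two variables, or by computing the determinant directly via the connection between Rankin--Cohen coefficients and Jacobi polynomials noted in the preceding remark. A more conceptual alternative bypasses the linear algebra: by the multiplicity-free branching law for the tensor product of two holomorphic discrete series of $SL(2,\mathbb{R})$ with weights $\lambda_1$ and $\lambda_2$, the Rankin--Cohen brackets $\{\RC_{\lambda_1,\lambda_2}^{(\ell)}\}_{\ell \in \mathbb{N}}$ together realize the complete isotypic decomposition of that tensor product, and their joint kernel is therefore trivial.
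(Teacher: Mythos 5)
Your first reduction is exactly the intended one (the paper itself gives no argument beyond ``derived from Theorem \ref{thm:23122033} as in \cite{KPgen}''): since $T_{\lambda_1,\lambda_2}f$ is holomorphic in $t$ on $U_D$, vanishing of $T_{\lambda_1,\lambda_2}f$ kills every Taylor coefficient in $t$, and the prefactors in \eqref{eqn:TRC} are nonzero, so the problem becomes the joint injectivity of $\{\RC_{\lambda_1,\lambda_2}^{(\ell)}\}_{\ell\in\mathbb{N}}$. Your inductive scheme on the diagonal jets $g_{a,b}$ is also the right one. The genuine gap is the one you flag yourself: you never prove that the $(N+1)\times(N+1)$ system is nonsingular, and without that the proof is incomplete. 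The good news is that this closes in three lines. At top order the matrix expresses the polynomials $(x+y)^k p_{N-k}(x,y)$, $k=0,\dots,N$, in the monomial basis of degree-$N$ homogeneous polynomials, where $p_\ell(x,y)=\sum_{j=0}^{\ell}(-1)^j\binom{\lambda_1+\ell-1}{j}\binom{\lambda_2+\ell-1}{\ell-j}x^{\ell-j}y^{j}$ is the symbol of $\RC_{\lambda_1,\lambda_2}^{(\ell)}$ from \eqref{eqn:RC}. If $\sum_k c_k (x+y)^k p_{N-k}=0$, set $y=-x$: only $k=0$ survives and $p_N(x,-x)=\binom{\lambda_1+\lambda_2+2N-2}{N}x^N\neq 0$ by Chu--Vandermonde (here $\lambda_1,\lambda_2\geq 1$ is used), so $c_0=0$; divide by $(x+y)$ and repeat. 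This is precisely the ``triangular change of basis'' you conjectured, and you should supply it rather than list it as an obstacle.

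Two smaller caveats. First, your ``conceptual alternative'' via the unitary branching law \eqref{eqn:fusion} does not literally apply here: it concerns the Hilbert spaces ${\mathcal{H}}^2(\Pi)_{\lambda_i}$ on the upper half plane, whereas the corollary is about arbitrary holomorphic $f$ on $D\times D$ for an arbitrary domain $D$, where there is no unitary structure and no completeness of the isotypic decomposition to invoke. What does survive is the algebraic ($\mathfrak{sl}_2$-module) shadow of that statement, which is exactly the linear-independence fact above, so the ``alternative'' is not really a way to bypass the linear algebra. Second, your parenthetical ``each connected component of $D\times D$'' does not work as written: off-diagonal components $D_i\times D_j$ ($i\neq j$) of a disconnected $D$ contain no diagonal points, and indeed the corollary would be false for disconnected $D$ (take $f$ supported on $D_1\times D_2$). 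The statement is saved because the paper takes $D$ to be a domain, hence connected, so that $D\times D$ is connected and the identity theorem applies from a neighborhood of the diagonal; you should say this rather than hedge with components.
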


\begin{proof}
[Proof of Theorem \ref{thm:23122033}]
Accordingly to Lemma \ref{lem:ATaylor}, 
 we expand $T_{\lambda_1, \lambda_2}f(z,t)$ into the Taylor series of $t$:
\[
   T_{\lambda_1, \lambda_2}f(z,t)
  =\sum_{\ell=0}^{\infty} (-1)^{\lambda_1+\ell-1} t^{\ell} (T_{\lambda_1, \lambda_2}^{(\ell)}f)(z)
\]
with coefficients $T_{\lambda_1, \lambda_2}^{(\ell)}f(z) \in {\mathcal{O}}(D)$.
Now the assertion follows from Theorem \ref{thm:RCresidue}.  
\end{proof}

\begin{example}
The formula \eqref{eqn:TRC} generalizes \cite[Thm.\ 2.3]{KPgen}
 which treated the case $\lambda_1=\lambda_2=1$.  
In this case, 
 $T_{1,1}$ is an integral operator 
 against the kernel
\[
   A_{1, 1}(\zeta_1, \zeta_2;z, t)
=
  \frac
  {1}
  {(\zeta_1-z) (\zeta_2-z)+t(\zeta_1-\zeta_2)}, 
\]
 and Theorem \ref{thm:23122033} reduces to 
\[
  (T_{1,1}f)(z,t)
  =\sum_{\ell=0}^{\infty} \frac{t^{\ell}}{\ell !}(R_{1,1}^{(\ell)} f)(z).  
\]
\end{example}

\begin{remark}
\label{rem:231226}
We remind a remarkable relationship 
 between the Rankin--Cohen brackets 
 and the Jacobi polynomials.  
The classical Jacobi polynomial $P_{\ell}^{(\alpha, \beta)}(x)$ is a polynomial 
 of degree $\ell$ given by 
\[
   P_{\ell}^{(\alpha, \beta)}(x)
   =
  \sum_{j=0}^{\ell} \frac{(\alpha+j+1)_{\ell-j}(\alpha+\beta+\ell+1)_{j}}{j! (\ell-j)!}
\left(\frac{x-1}2\right)^j. 
\]
Here the Pochhammer symbol $(x)_n$ is defined
 as the rising factorial $x(x+1) \cdots (x+n-1)$.  
We inflate the Jacobi polynomial 
 into a homogeneous polynomial in two variables $x$ and $y$
 of degree $\ell$ by 
\[
  \widetilde P_{\ell}^{(\alpha, \beta)}(x,y)
  :=
  y^{\ell} P_{\ell}^{(\alpha, \beta)}(1+\frac{2x}y).
\]  
Then the F-method \cite[Lem.\ 9.4]{KP2}
 establishes the correspondence:
\begin{equation}
\label{eqn:RCJacobi}
  \RC_{\lambda_1, \lambda_2}^{(\ell)}
  =
  \operatorname{Rest} \circ \widetilde P_{\ell}^{(\lambda_1-1, 1-\lambda_1-\lambda_2-2\ell)}(\frac{\partial}{\partial \zeta_1}, \frac{\partial}{\partial \zeta_2})
\end{equation}
 by showing that the \lq{symbol}\rq\ of $\RC_{\lambda_1, \lambda_2}^{(\ell)}$ satisfies
 the Jacobi differential equation
\[
  ((1-x^2)\frac{d^2}{d x^2} + (\beta-\alpha - (\alpha+\beta+2)x) \frac{d} {d x} + \ell(\ell+\alpha+\beta+1))f(x)=0
\]
where $\alpha=\lambda_1-1$ and $\beta=1-\lambda_1-\lambda_2-2\ell$.

On the other hand, 
 the generating function for the Jacobi polynomials
 $P_{\ell}^{(\alpha, \beta)}(x)$ is given by
\begin{equation}
\label{eqn:genJacobi}
\sum_{\ell=0}^{\infty} P_{\ell}^{(\alpha, \beta)}(x)t^{\ell}
  =
 \frac{2^{\alpha+\beta}}{R(1-t+R)^{\alpha}(1+t+R)^{\beta}}
\end{equation}
where $R=(1-2xt + t^2)^{\frac 1 2}$, 
 see {\it{e.g.,}} \cite[Thm.\ 6.4.2]{AAR}.  
For $\alpha=\beta=0$, 
 the Jacobi polynomial reduces to the Legendre polynomial $P_{\ell}(x)$, 
 of which the generating function is 
 given by $R^{-1}=(1-2 x t + t^2)^{-\frac 1 2}$.

However, 
 the generating function \eqref{eqn:genJacobi}
 with the normalizing constants $a_{\ell} \equiv 1$, 
 see \eqref{eqn:genop}, 
 is not directly related
 to the generating operator $T_{\lambda_1, \lambda_2}$
 defined in \eqref{eqn:TRC}
 with $a_{\ell}$ decreasing rapidly as $\ell \to \infty$, 
 see \eqref{eqn:24012803}.  
\end{remark}

%%%%%%%%%%%%%%%%%%%%%%%%%%%%%%%%%%%%%%%%%%%%%%%%%%%%%%
\section{Generating operators for symmetry breaking}
\label{sec:SL2}
%%%%%%%%%%%%%%%%%%%%%%%%%%%%%%%%%%%%%%%%%%%%%%%%%%%%%%

This section explains
 our results from the viewpoint
 of the representation theory.

In general, 
 the generating operator $T$ is a single operator
 that should contain all the information of a countable family operators
 $R^{(\ell)}$ ($\ell \in {\mathbb{N}}$).  
In our setting, 
 the family $\{\RC_{\lambda_1, \lambda_2}^{(\ell)}\}_{\ell \in {\mathbb{N}}}$
 of the Rankin--Cohen brackets 
 arises as {\it{symmetry breaking operators}}
 of the fusion rule of two irreducible unitary representations
 of $SL(2,{\mathbb{R}})$.  
We formulate this property
 in terms of a generating operator
 with appropriate normalizing constants $\{a_{\ell}\}_{\ell \in {\mathbb{N}}}$
 in \eqref{eqn:genop}.  
The main result of this section
is Theorem \ref{thm:24012807}.

To be precise, 
 we define the following Hilbert spaces:
\begin{alignat*}{2}
{\mathcal{H}}^2(\Pi):=&\{F \in {\mathcal{O}}(\Pi):\|F\|_{\text{Hardy}}<\infty\}
\quad
&&\text{(Hardy space), }
\\
{\mathcal{H}}^2(\Pi)_{\lambda}
:=&\{F \in {\mathcal{O}}(\Pi):\|F\|_{\lambda}<\infty\}
\quad
&&\text{(weighted Bergman spaces),   }
\end{alignat*}
where the norms are given by
\begin{align*}
\|F\|_{\text{Hardy}}^2:=&
\sup_{y>0} \int_{-\infty}^{\infty} |F(x+\sqrt{-1}y)|^2d x, 
\\
\|F\|_{\lambda}^2:=&
\int_{-\infty}^{\infty}\int_{-\infty}^{\infty} |F(x+\sqrt{-1}y)|^2 y^{\lambda-2} d x d y.  
\end{align*}
Then ${\mathcal{H}}^2(\Pi)$ and ${\mathcal{H}}^2(\Pi)_{\lambda}$ are
 invariant subspaces 
 of the representations $(\varpi_{\lambda}, {\mathcal{O}}(\Pi))$
 of $SL(2,{\mathbb{R}})$, 
 see \eqref{eqn:pilmd}, 
 for $\lambda=1$ and $\lambda \ge 2$, 
 respectively, 
 yielding irreducible unitary representations.  
By an abuse of notation, 
 we shall use the same letter $\varpi_{\lambda}$
 to denote these unitary representations.  
Then the fusion rule (abstract irreducible decomposition)
 of the tensor product representation $\varpi_{\lambda_1} \widehat \otimes \varpi_{\lambda_2}$ is known
 by Repka \cite{Re79} as follows.  
\begin{equation}
\label{eqn:fusion}
 \varpi_{\lambda_1} \widehat \otimes \varpi_{\lambda_2}
  \simeq 
  {\sum_{\ell=0}^{\infty}}{\raisebox{.8em}{\text{\footnotesize{$\oplus$}}}} \varpi_{\lambda_1+\lambda_2+2\ell}
\quad
\text{(Hilbert direct sum)},
\end{equation}
where $\widehat \otimes$ and $\sum{\raisebox{.8em}{\text{\footnotesize{$\oplus$}}}}$ stand for the Hilbert space completion
 of an algebraic tensor product
 and that of an algebraic direct sum.  
A remarkable feature in the fusion rule \eqref{eqn:fusion} is 
 that it has no continuous spectrum, 
 see \cite{xkAnn98}
 for the general theory
 of discrete decomposability
 and \cite{xrims40}
 for that of multiplicity-free decompositions.

The Rankin--Cohen bracket 
 $\RC_{\lambda_1, \lambda_2}^{(\ell)} \colon {\mathcal{O}}(\Pi \times \Pi) \to 
{\mathcal{O}}(\Pi)$
 induces a projection map
\[
   {\mathcal{H}}^2(\Pi)_{\lambda_1} \otimes {\mathcal{H}}^2(\Pi)_{\lambda_2}
   \to 
   {\mathcal{H}}^2(\Pi)_{\lambda_1 + \lambda_2+ 2 \ell}
\]
 for all $\lambda_1, \lambda_2 \in {\mathbb{N}}_+$
 and $\ell \in {\mathbb{N}}$.

We now collect these data for $\{\RC_{\lambda_1, \lambda_2}^{(\ell)}\}_{\ell \in {\mathbb{N}}}$
 in a single operator, 
 namely, 
 the generating operator.  
Let ${\underset{\ell=0}{\overset{\infty}\sum}}{\raisebox{.8em}{\text{\footnotesize{$\oplus$}}}} 
{\mathcal{H}}^2(\Pi)_{\lambda_1+\lambda_2+2\ell} \otimes {\mathbb{C}} t^{\ell}$
 denote the Hilbert completion 
 of the algebraic direct sum
\[
   {\bigoplus_{\ell=0}^{\infty}}
   {\mathcal{H}}^2(\Pi)_{\lambda_1+\lambda_2+2\ell}
   \otimes {\mathbb{C}} t^{\ell}
\]
equipped with the pre-Hilbert structure given by
\[
   (u \otimes t^{\ell}, v \otimes t^{\ell'})
   :=
   \delta_{\ell \ell'} (u,v)_{\lambda_1+\lambda_2+2\ell}.  
\]
For $\lambda_1, \lambda_2>1$, 
 we set 
\begin{equation}
\label{eqn:240128}
  a_{\ell}(\lambda_1, \lambda_2):=
 (c_{\ell}(\lambda_1, \lambda_2) r_{\ell}(\lambda_1, \lambda_2))^{-\frac 1 2}, 
\end{equation}
where we follow \cite[(2.8) and (2.9)]{KPinv}
 for the notations of positive constants $c_{\ell}(\lambda_1, \lambda_2)$ and $r_{\ell}(\lambda_1, \lambda_2)$ as below.  
\begin{align*}
  c_{\ell}(\lambda_1, \lambda_2)
  :=&
  \frac{\Gamma(\lambda_1+\ell)\Gamma(\lambda_2+\ell)}
       {(\lambda_1+\lambda_2+2\ell-1)\Gamma(\lambda_1+\lambda_2+\ell-1)\ell!}, 
\\
 r_{\ell}(\lambda_1, \lambda_2)
 :=&
  \frac{\Gamma(\lambda_1+\lambda_2+2\ell-1)}
       {2^{2\ell+2}\pi \Gamma(\lambda_1-1)\Gamma(\lambda_2-1)}.  
\end{align*}

We also set
\begin{align}
\label{eqn:24012811}
a_{\ell}(1,1)
:=&\lim_{\lambda_1 \downarrow 1} 
  \lim_{\lambda_2 \downarrow 1}
  (\lambda_1-1)^{\frac 1 2}  (\lambda_2-1)^{\frac 1 2}
  a_{\ell}(\lambda_1, \lambda_2)
\\
\notag
=&
\left(\frac{\ell !(2\ell-1)!!}
       {2^{\ell+2} \pi(2\ell+1)}
\right)^{-\frac 1 2}.  
\end{align}

\begin{theorem}
\label{thm:24012807}
\begin{enumerate}
\item[{\rm{(1)}}]
If $\lambda_1, \lambda_2>1$, 
 then the generating operator
\begin{equation}
\label{eqn:Tunitary}
T=\sum_{\ell=0}^{\infty}
a_{\ell}(\lambda_1, \lambda_2) \RC_{\lambda_1, \lambda_2}^{(\ell)}
 t^{\ell}
\end{equation}
is a unitary map
 that yields the decomposition 
\begin{equation}
\label{eqn:Hfusion}
 {\mathcal{H}}^2(\Pi)_{\lambda_1} \widehat\otimes {\mathcal{H}}^2(\Pi)_{\lambda_2}
  \overset \sim \longrightarrow 
  {\sum_{\ell=0}^{\infty}}{\raisebox{.8em}{\text{\footnotesize{$\oplus$}}}}
  {\mathcal{H}}^2(\Pi)_{\lambda_1+\lambda_2+2\ell}
  \otimes {\mathbb{C}} t^{\ell}.  
\end{equation}
\item[{\rm{(2)}}]
Similarly, 
 the generating operator \eqref{eqn:Tunitary}
 with $\lambda_1=\lambda_2=1$
 gives a unitary map
\[
 {\mathcal{H}}^2(\Pi) \widehat\otimes {\mathcal{H}}^2(\Pi)
  \overset \sim \longrightarrow
  {\sum_{\ell=0}^{\infty}}{\raisebox{.8em}{\text{\footnotesize{$\oplus$}}}}
  {\mathcal{H}}^2(\Pi)_{2\ell+2}
  \otimes {\mathbb{C}} t^{\ell}.  
\]
\end{enumerate}
\end{theorem}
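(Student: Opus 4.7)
The plan is to combine Repka's multiplicity-free fusion rule \eqref{eqn:fusion} with the covariance property of Corollary \ref{cor:SBO} via Schur's lemma, and then pin down the normalizing constants $a_{\ell}(\lambda_1,\lambda_2)$ by a direct norm computation borrowed from \cite{KPinv}.

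First, for $\lambda_1,\lambda_2>1$, I would observe that Corollary \ref{cor:SBO} makes each $\RC^{(\ell)}_{\lambda_1,\lambda_2}$ an $SL(2,\mathbb{R})$-intertwining operator from ${\mathcal{H}}^2(\Pi)_{\lambda_1}\widehat\otimes{\mathcal{H}}^2(\Pi)_{\lambda_2}$ into ${\mathcal{H}}^2(\Pi)_{\lambda_1+\lambda_2+2\ell}$ (after verifying boundedness on a dense $K$-finite subspace, which follows from the fact that all representations involved are in the holomorphic discrete series and the bracket preserves $K$-types). Combined with the discreteness and multiplicity-freeness of \eqref{eqn:fusion}, Schur's lemma forces $\RC^{(\ell)}_{\lambda_1,\lambda_2}$ to be a positive scalar multiple of the unique partial isometry onto the $\ell$-th summand. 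Thus the only real content of part (1) is the exact determination of this scalar.

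Next, I would identify the two factors in $a_\ell(\lambda_1,\lambda_2)=(c_\ell r_\ell)^{-1/2}$ with the explicit norm data of \cite{KPinv}. The constant $c_\ell(\lambda_1,\lambda_2)$ is the squared norm of a distinguished lowest-weight vector in ${\mathcal{H}}^2(\Pi)_{\lambda_1+\lambda_2+2\ell}$ (equivalently, the value of its reproducing kernel at a base point), while $r_\ell(\lambda_1,\lambda_2)$ is the squared norm of the corresponding lowest-weight vector of the $\ell$-th isotypic component sitting inside the tensor product. Evaluating $\RC^{(\ell)}_{\lambda_1,\lambda_2}$ on this pair of explicit vectors turns the scalar identification into an arithmetic identity, from which one reads off that $a_\ell(\lambda_1,\lambda_2)\RC^{(\ell)}_{\lambda_1,\lambda_2}$ is precisely the isometric projection onto the $\ell$-th summand of \eqref{eqn:fusion}. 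Assembling these pieces with the prescribed pre-Hilbert structure on $\sum^\oplus{\mathcal{H}}^2(\Pi)_{\lambda_1+\lambda_2+2\ell}\otimes{\mathbb{C}}t^\ell$ gives \eqref{eqn:Hfusion}.

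For part (2), I would treat the Hardy-space case by a limiting argument along $\lambda_1,\lambda_2\downarrow 1$. After rescaling by $(\lambda_i-1)^{1/2}$, the weighted Bergman inner product converges on a dense space of rational test functions (or $K$-finite vectors) to the Hardy-space inner product, and the definition \eqref{eqn:24012811} is tailored exactly so that the rescaled components $a_\ell(\lambda_1,\lambda_2)\RC^{(\ell)}_{\lambda_1,\lambda_2}$ have well-defined limits giving isometric pieces in the Hardy setting. Unitarity then follows from part (1) by continuity on a total set and a density argument. The hard part will be step two, namely the bookkeeping needed to verify that the two norm computations multiply to give precisely $(c_\ell r_\ell)^{-1/2}$; in particular, one must reconcile the normalization convention for the Rankin--Cohen bracket used in \cite{KPinv} with that of \eqref{eqn:RC}. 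Once this explicit lowest-weight vector calculation is in place, the remaining steps — intertwining, Schur's lemma in the discretely decomposable setting, and the limit for the Hardy space — are routine.
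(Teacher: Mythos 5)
Your plan is, in substance, the same reduction the paper makes: the paper's entire proof of Theorem \ref{thm:24012807} is a one-line appeal to the operator-norm formula for $\RC_{\lambda_1,\lambda_2}^{(\ell)}$ proven in \cite[Thm.~2.7]{KPinv} (for $\lambda_1,\lambda_2>1$) and \cite[Thm.~5.1]{KPgen} (for $\lambda_1=\lambda_2=1$), and what you spell out --- covariance from Corollary \ref{cor:SBO}, discreteness and multiplicity-freeness of Repka's fusion rule \eqref{eqn:fusion}, Schur's lemma forcing each $\RC_{\lambda_1,\lambda_2}^{(\ell)}$ to be a scalar multiple of the canonical partial isometry onto the $\ell$-th summand, and an extremal-vector evaluation to identify that scalar with $(c_\ell(\lambda_1,\lambda_2)\,r_\ell(\lambda_1,\lambda_2))^{1/2}$ --- is exactly the content those citations package, so your route is correct in outline. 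Two caveats on the details you defer. First, boundedness is not quite ``routine from preserving $K$-types'': the clean argument is that a $({\mathfrak{g}},K)$-intertwiner between irreducible unitarizable Harish--Chandra modules pulls the invariant inner product back to a scalar multiple of the given one, hence is automatically a scalar times an isometry on $K$-finite vectors; one then applies this on each isotypic component of the discretely decomposable, multiplicity-free tensor product. Second, your attribution of $c_\ell$ and $r_\ell$ \emph{separately} to two specific lowest-weight-vector norms is a guess about the bookkeeping of \cite[(2.8)--(2.9)]{KPinv}; only the product $c_\ell r_\ell$, i.e.\ the squared operator norm, enters the theorem, so you should either verify that split against \cite{KPinv} or phrase the computation directly in terms of $\|\RC_{\lambda_1,\lambda_2}^{(\ell)}\|^2$, taking care (as you note) to match the normalization \eqref{eqn:RC} with the one used there. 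Your limiting argument $\lambda_i\downarrow 1$ for part (2) is consistent with how the paper defines $a_\ell(1,1)$ in \eqref{eqn:24012811}, though the paper again simply cites \cite[Thm.~5.1]{KPgen} rather than performing the limit.
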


\begin{remark}
The normalizing constants
 $\{a_{\ell}(\lambda_1, \lambda_2)\}_{\ell \in {\mathbb{N}}}$
 defined in \eqref{eqn:240128} are 
 different from those in \eqref{eqn:TRC}.  
However, 
 they have the same asymptotic behavior 
 as $\ell$ tends to infinity, 
 that is, 
\begin{equation}
\label{eqn:alinfty}
\underset{\ell \to \infty}\lim(a_{\ell}(\lambda_1, \lambda_2)\ell!)^{\frac 1 \ell}=1.  
\end{equation}
As we will see in Theorem \ref{thm:T} below, 
the formal power series 
\eqref{eqn:Tunitary} converges
  owing to \eqref{eqn:alinfty}.  
\end{remark}

\begin{proof}
Theorem \ref{thm:24012807} is derived from the formula of the operator norm
 of the Rankin--Cohen brackets 
 proven in \cite[Thm.\ 2.7]{KPinv}
 for $\lambda_1, \lambda_2 >1$
 and in \cite[Thm.\ 5.1]{KPgen} for $\lambda_1=\lambda_2=1$.  
\end{proof}

%%%%%%%%%%%%%%%%%%%%%%%%%%%%%%%%%%%%%%%%%%%%%%%
\section{Freedom of normalizing constants}
%%%%%%%%%%%%%%%%%%%%%%%%%%%%%%%%%%%%%%%%%%%%%%%

The definition of the generating operator in \eqref{eqn:genop}
 allows us the freedom to choose normalizing constants $\{a_{\ell}\}_{\ell \in {\mathbb{N}}}$.  
In fact, 
 the closed formula in Theorem \ref{thm:23122033}
 is obtained by taking $a_{\ell}$ to be 
 $\frac{(\lambda_1+ \lambda_2 + \ell-2)!}{(\lambda_1+ \ell-1)!\,(\lambda_2 + \ell-1)!}$
 rather than $a_{\ell}=\frac 1 {\ell !}$ or $a_{\ell} \equiv 1$.  
This section explores how the choice of $\{a_{\ell}\}_{\ell \in {\mathbb{N}}}$ 
 affects the generating operator 
 in terms of its kernel function
 by \eqref{eqn:23122021} and \eqref{eqn:hs}.

Let $h(s)$ be a holomorphic function of one variable $s$ near the origin, 
 and set
\[
     \varphi(\zeta_1, \zeta_2; z)
     :=
     \frac{\zeta_1-\zeta_2}{(\zeta_1-z) (\zeta_2-z)},   
\]

\begin{equation}
\label{eqn:23122021}
 A^{(h)}(\zeta_1, \zeta_2;z,t)
:=
  \frac{(\zeta_1-\zeta_2)^{\lambda_1+\lambda_2-2}}{(\zeta_1-z)^{\lambda_2} (\zeta_2-z)^{\lambda_1}}
  h(t \varphi(\zeta_1, \zeta_2;z)).  
\end{equation}

If $h(s)=(-1)^{\lambda_1-1} (1+s)^{-1}$, 
 then 
$
  A^{(h)}(\zeta_1, \zeta_2;z,t)
$
 in \eqref{eqn:23122021}
 coincides with $A_{\lambda_1, \lambda_2}(\zeta_1, \zeta_2;z,t)$, 
 see \eqref{eqn:23122032}.

In the generality of \eqref{eqn:23122021}, 
 an analogous covariance property
 to Lemma \ref{lem:23122107} still holds:

\begin{proposition}
\label{prop:23121932}
For any holomorphic function $h(s)$
 near the origin
 and for any $g = \begin{pmatrix} a & b \\ c & d\end{pmatrix} \in SL(2,{\mathbb{C}})$, 
 one has
\begin{multline*}
   A^{(h)}(g \cdot \zeta_1, g \cdot \zeta_2; g \cdot z,\frac{t}{(c z +d)^2})
\\
  =
  \frac{(c z +d)^{\lambda_1+\lambda_2}}
       {(c \zeta_1 +d)^{\lambda_1-2}(c \zeta_2 +d)^{\lambda_2-2}} 
  A^{(h)}(\zeta_1, \zeta_2;z,t), 
\end{multline*}
 whenever the formula makes sense.  
\end{proposition}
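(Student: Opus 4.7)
The plan is to exploit the factored structure of $A^{(h)}$, writing it as the product of a purely rational prefactor and a composition with $h$, and then checking the covariance of each piece separately.

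First, I would observe that the rational prefactor
\[
K(\zeta_1,\zeta_2;z) := \frac{(\zeta_1-\zeta_2)^{\lambda_1+\lambda_2-2}}{(\zeta_1-z)^{\lambda_2}(\zeta_2-z)^{\lambda_1}}
\]
coincides with $A_{\lambda_1,\lambda_2}^{(0)}(\zeta_1,\zeta_2;z)$ from \eqref{eqn:Al}. Hence Lemma \ref{lem:23122107} applied with $\ell=0$ yields
\[
K(g\cdot\zeta_1, g\cdot\zeta_2; g\cdot z) = \frac{(cz+d)^{\lambda_1+\lambda_2}}{(c\zeta_1+d)^{\lambda_1-2}(c\zeta_2+d)^{\lambda_2-2}}\, K(\zeta_1,\zeta_2;z),
\]
which can equivalently be verified directly by substituting \eqref{eqn:gz} into each of the three differences $\zeta_1-\zeta_2$, $\zeta_1-z$, $\zeta_2-z$ and collecting the resulting automorphy factors.

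Second, I would apply the same substitution \eqref{eqn:gz} to compute the transformation of $\varphi$. The $(c\zeta_1+d)$ and $(c\zeta_2+d)$ factors arising from the numerator cancel exactly against those appearing in the two factors of the denominator, yielding the weight-$2$ relative invariance
\[
\varphi(g\cdot\zeta_1, g\cdot\zeta_2; g\cdot z) = (cz+d)^{2}\,\varphi(\zeta_1,\zeta_2;z).
\]
Therefore the rescaling $t \mapsto t/(cz+d)^{2}$ precisely compensates for this factor, so that
\[
\tfrac{t}{(cz+d)^{2}}\,\varphi(g\cdot\zeta_1, g\cdot\zeta_2; g\cdot z) = t\,\varphi(\zeta_1,\zeta_2;z),
\]
and hence $h$ is evaluated at the same point before and after the transformation. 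Combining this with the scaling law of $K$ gives the formula in the statement.

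The main obstacle is essentially bookkeeping: tracking the exponents of $(c\zeta_1+d)$ and $(c\zeta_2+d)$, which appear asymmetrically because of the asymmetric roles of $\lambda_1$ and $\lambda_2$ in the kernel $A^{(h)}$. No genuine analytic difficulty arises; the phrase \emph{whenever the formula makes sense} simply restricts attention to a domain where the automorphy factors do not vanish and where the argument $t\varphi(\zeta_1,\zeta_2;z)$ lies within the disk of convergence of the holomorphic function $h$.
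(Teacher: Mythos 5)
Your proposal is correct and follows essentially the same route as the paper: the paper's proof likewise records the relative invariance $\varphi(g\cdot\zeta_1,g\cdot\zeta_2;g\cdot z)=(cz+d)^2\varphi(\zeta_1,\zeta_2;z)$ and then notes that the rational prefactor transforms exactly as in Lemma \ref{lem:23122107}. Your observation that this prefactor is precisely $A^{(0)}_{\lambda_1,\lambda_2}$, so that the Lemma applies verbatim with $\ell=0$, is a clean way of making the paper's phrase \lq\lq{goes in parallel}\rq\rq\ explicit (and your exponent $(c\zeta_2+d)^{\lambda_2-2}$ is the correct one, matching the Proposition rather than the apparent typo in the Lemma's displayed formula).
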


\begin{proof}
In view of the formula \eqref{eqn:gz}, 
one has
\[
  \varphi(g \cdot \zeta_1, g \cdot \zeta_2; g\cdot z)
 =
  (c z + d)^2 \varphi(\zeta_1, \zeta_2; z), 
\]
the proof goes in parallel to that of Lemma \ref{lem:23122107}.  
\end{proof}

Suppose that we are given a sequence $\{a_{\ell}\}_{\ell \in {\mathbb{N}}}$
 of complex numbers.  
In order to apply the above framework, 
 we define $h(s)$
 by $\{a_{\ell}\}_{\ell \in {\mathbb{N}}}$ as follows.  
We set for fixed $\lambda_1, \lambda_2 \in {\mathbb{N}}_+$
\begin{equation}
\label{eqn:23122502}
h_{\ell}:=\frac{(\lambda_1+\ell-1)!(\lambda_2+\ell-1)!}
               {(\lambda_1+\lambda_2+\ell-2)!} a_{\ell}
\quad
\text{for $\ell \in {\mathbb{N}}$}, 
\end{equation}
and define $h(s)$ by 
\begin{equation}
\label{eqn:hs}
h(s):=\underset{\ell=0}{\overset{\infty}\sum} h_{\ell} s^{\ell}.  
\end{equation}
The power series \eqref{eqn:hs} converges, 
 if $\underset{\ell \to \infty}{\limsup}|h_{\ell}|^{\frac 1 \ell}<\infty$, 
 or equivalently, 
 if 
\begin{equation}
\label{eqn:24012803}
 \frac 1 \rho:= \limsup_{\ell \to \infty} (|a_{\ell}| \ell!)^{\frac 1 \ell}
 <\infty.  
\end{equation}
Then $h(s)$ is a holomorphic function
 in $\{s \in {\mathbb{C}}:|s|<\rho\}$.  

The integral transform 
\begin{equation}
\label{eqn:T2}
  (T^{(h)}f):=\frac 1 {(2 \pi \sqrt{-1})^2} \oint_{C_1} \oint_{C_2}
  A^{(h)}(\zeta_1, \zeta_2;z, t) f(\zeta_1, \zeta_2) d \zeta_1 d \zeta_2
\end{equation}
is a generating operator of the Rankin--Cohen brackets
 $\{\RC_{\lambda_1, \lambda_2}^{(\ell)}\}_{\ell \in {\mathbb{N}}}$
 with normalizing constants $\{a_{\ell}\}_{\ell \in {\mathbb{N}}}$.  

\begin{theorem}
[Generating operator of the Rankin--Cohen brackets]
\label{thm:T}
Let $h(s)$ be defined by $\{a_{\ell}\}_{\ell \in {\mathbb{N}}}$
 as in \eqref{eqn:23122502} and \eqref{eqn:hs}.  
The integral operator $T^{(h)}$ in \eqref{eqn:T2} is expressed as 
\begin{equation}
\label{eqn:Thf}
  (T^{(h)} f) (z, t)
=\sum_{\ell=0}^{\infty}
 a_{\ell} (\RC_{\lambda_1, \lambda_2}^{(\ell)}f)(z) t^{\ell}.  
\end{equation}
In particular, 
 the operator-valued formal series
\[
  \sum_{\ell=0}^{\infty} a_{\ell} \RC_{\lambda_1, \lambda_2}^{(\ell)} t^{\ell}
\]
 converges for $|t|\ll 1$
 if $\{a_{\ell}\}_{\ell \in {\mathbb{N}}}$ satisfies
 \eqref{eqn:24012803}.  
\end{theorem}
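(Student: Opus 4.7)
The plan is to expand $h$ as a power series in $s$, substitute $s=t\varphi$ into the kernel $A^{(h)}$ so that $A^{(h)}$ itself becomes a power series in $t$ whose $\ell$-th coefficient is (up to an explicit scalar) the kernel $A^{(\ell)}_{\lambda_1,\lambda_2}$ of \eqref{eqn:Al}, and then integrate term-by-term, invoking Theorem \ref{thm:RCresidue} to recognize each summand as a multiple of the Rankin--Cohen bracket. The normalization \eqref{eqn:23122502} has been calibrated precisely so that these two scalar factors combine to give $a_\ell$.

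Concretely, condition \eqref{eqn:24012803} is equivalent to $\limsup|h_\ell|^{1/\ell}<\infty$, so $h(s)=\sum_{\ell\ge 0}h_\ell s^\ell$ converges on some disk of positive radius $\rho$. Substituting $s=t\varphi(\zeta_1,\zeta_2;z)$ into \eqref{eqn:23122021} and comparing with \eqref{eqn:Al} gives the formal identity
\[
A^{(h)}(\zeta_1,\zeta_2;z,t)=\sum_{\ell=0}^{\infty} h_\ell\, t^\ell\, A^{(\ell)}_{\lambda_1,\lambda_2}(\zeta_1,\zeta_2;z),
\]
which is the power-series analog of Lemma \ref{lem:ATaylor} (recovered from the special case $h(s)=(-1)^{\lambda_1-1}(1+s)^{-1}$). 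To upgrade this to a pointwise identity on $U_D$, I fix $z\in D$, choose contours $C_1,C_2\subset D$ around $z$, and set $M:=\sup_{(\zeta_1,\zeta_2)\in C_1\times C_2}|\varphi(\zeta_1,\zeta_2;z)|<\infty$. For $|t|<\rho/M$ the series $\sum h_\ell (t\varphi)^\ell$ converges uniformly on the compact set $C_1\times C_2$; multiplying by the bounded prefactor $(\zeta_1-\zeta_2)^{\lambda_1+\lambda_2-2}(\zeta_1-z)^{-\lambda_2}(\zeta_2-z)^{-\lambda_1}$ and by $f(\zeta_1,\zeta_2)$ preserves uniform convergence, so term-by-term integration is legitimate and yields
\[
(T^{(h)}f)(z,t)=\sum_{\ell=0}^{\infty} h_\ell\, t^\ell\, (T^{(\ell)}_{\lambda_1,\lambda_2}f)(z).
\]

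Substituting Theorem \ref{thm:RCresidue} for each $T^{(\ell)}_{\lambda_1,\lambda_2}$ and invoking the definition \eqref{eqn:23122502} of $h_\ell$, whose prefactor is the reciprocal of the scalar produced by Theorem \ref{thm:RCresidue}, the constants collapse to $a_\ell$ and establish \eqref{eqn:Thf}. The concluding convergence assertion is then immediate: $(T^{(h)}f)(z,t)$ is by construction a holomorphic function of $t$ on the disk $|t|<\rho/M$, and its Taylor coefficients at $t=0$ are $a_\ell(\RC^{(\ell)}_{\lambda_1,\lambda_2}f)(z)$, so the operator-valued formal series $\sum a_\ell\RC^{(\ell)}_{\lambda_1,\lambda_2}t^\ell$ converges on $\mathcal{O}(D\times D)$ for $|t|\ll 1$. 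The only real technical step is the uniform-convergence bound on the product of contours that legitimizes the sum-integral exchange; once that is in place, the proof is a bookkeeping check on the normalization constants combined with a direct appeal to Theorem \ref{thm:RCresidue}.
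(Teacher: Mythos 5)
Your argument is correct in structure and is essentially the proof the paper intends: the paper prints no separate proof of Theorem \ref{thm:T}, only the remark that Theorem \ref{thm:23122033} is the special case $h(s)=(-1)^{\lambda_1-1}(1+s)^{-1}$, and the implicit proof is exactly your three steps --- expand $h(t\varphi)$ to get $A^{(h)}=\sum_{\ell}h_{\ell}t^{\ell}A^{(\ell)}_{\lambda_1,\lambda_2}$ (the general-$h$ version of Lemma \ref{lem:ATaylor}), justify term-by-term integration by uniform convergence on the compact contours for $|t|$ small, and apply Theorem \ref{thm:RCresidue} to each coefficient. One bookkeeping point is off, however: the prefactor in \eqref{eqn:23122502} is \emph{not} the exact reciprocal of the scalar in Theorem \ref{thm:RCresidue}; their product is $(-1)^{\lambda_1+\ell-1}$, so your computation as described actually yields $(T^{(h)}f)(z,t)=\sum_{\ell}(-1)^{\lambda_1+\ell-1}a_{\ell}(\RC^{(\ell)}_{\lambda_1,\lambda_2}f)(z)\,t^{\ell}$ rather than \eqref{eqn:Thf}. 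This sign mismatch is internal to the paper's own normalization: for $h(s)=(-1)^{\lambda_1-1}(1+s)^{-1}$ one has $h_{\ell}=(-1)^{\lambda_1+\ell-1}$, and \eqref{eqn:23122502} then forces $a_{\ell}=(-1)^{\lambda_1+\ell-1}\frac{(\lambda_1+\lambda_2+\ell-2)!}{(\lambda_1+\ell-1)!\,(\lambda_2+\ell-1)!}$, whereas Theorem \ref{thm:23122033} asserts the unsigned constant; so \eqref{eqn:23122502} evidently should carry an extra factor $(-1)^{\lambda_1+\ell-1}$. This is a typo to be absorbed into the definition of $h_{\ell}$ rather than a gap in your reasoning, but you should not claim the two scalars cancel exactly when they differ by this sign.
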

Theorem \ref{thm:23122033} corresponds to the case
 $h(s)=(-1)^{\lambda_1-1} (1+s)^{-1}$.  

\begin{remark}
The radius of convergence of the power series \eqref{eqn:Thf}
 is zero if we take $a_{\ell}\equiv 1$.  
\end{remark}
\vskip 1pc
\par\noindent
{\bf{Acknowledgement.}}
\newline
The first author warmly thanks Professor Vladimir Dobrev
 for his hospitality 
during the 15th International Workshop:
Lie Theory and its Applications 
 in Physics, 
 held in Varna, Bulgaria 19--25, 
 June 2023.  
The authors were partially supported by the JSPS
 under the Grant-in-Aid for Scientific Research (A) 
 (JP23H00084)
 and by the JSPS Fellowship L23505.

\vskip15pt
\footnotesize{
\leftline
{Toshiyuki KOBAYASHI}
\leftline
{Graduate School of Mathematical Sciences, }
\leftline
{The University of Tokyo,
 3-8-1 Komaba, Meguro, 
Tokyo, 153-8914, Japan.} 

\&

\leftline
{French-Japanese Laboratory
 in Mathematics and its Interactions, }
\leftline{
FJ-LMI CNRS IRL2025, Tokyo, Japan}

\leftline{ E-mail:
\texttt{toshi@ms.u-tokyo.ac.jp}}

\vskip 1pc
\leftline
{Michael PEVZNER}
\leftline
{LMR, 
Universit{\'e} de Reims-Champagne-Ardenne, 
CNRS UMR 9008, F-51687,}
\leftline{Reims, France.} 

\&

\leftline
{French-Japanese Laboratory
 in Mathematics and its Interactions, }
\leftline{
FJ-LMI CNRS IRL2025, Tokyo, Japan}
\leftline{E-mail: \texttt{ pevzner@math.cnrs.fr}}
}

\end{document}